\newcommand{\RR}{{\mathbb R}}
\newcommand{\ZZ}{{\mathbb Z}}
\newcommand{\no}[1]{^{(#1)}}
\newcommand{\ceil}[1]{\left\lceil #1 \right\rceil}
\newcommand{\floor}[1]{\left\lfloor #1 \right\rfloor}
\newenvironment{narrowarray}[1]{\arraycolsep=0.15em\begin{array}{#1}}{\end{array}}
\newcommand{\fracc}[2]{\frac{\textstyle \strut #1 \strut}{\textstyle \strut #2 \strut}}
\newcommand{\set}[1]{\left\{ #1\right\}}
\begin{document}

\title{On the Proximity of the Optimal Values of~the~Multi-Dimensional Knapsack Problem with~and~without~the~Cardinality Constraint\thanks{This work was performed at UNN Scientific and Educational Mathematical Center.}}

\titlerunning{On the Proximity of the Optimal Values of the Knapsack Problem with and without the Cardinality Constraint}  

\author{A.\,Yu.\,Chirkov\orcidID{0000-0001-5467-8667} 
    \and D.\,V.\,Gribanov\orcidID{0000-0002-4005-9483}
    \and N.\,Yu.\,Zolotykh\orcidID{0000-0003-4542-9233}}

\institute{Lobachevsky State University of Nizhny Novgorod, Gagarin ave. 23, \\
Nizhny Novgorod 603600, Russia,\\
\email{\{aleksandr.chirkov, dmitry.gribanov, nikolai.zolotykh\}@itmm.unn.ru}}

\maketitle

\begin{abstract}
We study the proximity of the optimal value of the $m$-dimen\-sional knapsack problem to the optimal value of that problem with the additional restriction that only one type of items is allowed to include in the solution.    
We derive exact and asymptotic formulas for the precision of such approximation, i.e. for the infinum of the ratio of the optimal value for the objective functions of the problem with the cardinality constraint and without it. In particular, we prove that the precision tends to $0.59136\dots/m$ if $n\to\infty$ and $m$ is fixed. Also, we give the class of the worst multi-dimensional knapsack problems for which the bound is attained.    
Previously, similar results were known only for the case $m=1$.

\keywords{Multi-dimensional knapsack problem \and
    Approximate solution \and Cardinality constraints}
\end{abstract}

\section{Introduction}

In \cite{KohliKrishnamurti1992,KohliKrishnamurti1995,CapraraKellererPferschyPisinger2000,ChirkovShevchenko2006} the proximity of the optimal value of the (one-dimensional) knapsack problem to the optimal value of the problem with the cardinality constraints was studied. The cardinality constraint is the additional restriction that only $k$ type of items is allowed to include in the solution (i.e. that only $k$ coordinates of the optimal solution vector can be non-zero). Different upper and lower bounds for the guaranteed precision, i.e. for the infinum of the ratio of the optimal value for the objective functions of the problem with the cardinality constraints and without them, were obtained. Also, in some cases the classes of worst problems were constructed.

The importance of such kind of research is due to the fact that some algorithms for solving the knapsack problems require to find an optimal solution to that problem with the cardinality constraints; see, for example \cite{KohliKrishnamurti1992,KohliKrishnamurti1995}, where this approach is used for constructing greedy heuristics for the integer knapsack problem. Moreover, the results of research can be potentially useful for constructing new fully polynomial approximation schemes.

Here, from this point of view, we consider the $m$-dimensional knapsack problem. The solution to that problem with the additional constraint that only $1$ coordinate can be non-zero is called the approximate solution. We derive exact and asymptotic formulas for the precision of such approximation. In particular, we prove that the precision tends to $0.59136\dots/m$ if $n\to\infty$ and $m$ is fixed. Also, we give a class of worst multi-dimensional knapsack problems for which the bound is attained.

\section{Definitions}

Denote by $\ZZ_+$, $\RR_+$ the sets of all non-negative integer and real numbers respectively.
Let
$$
L(A, b) = \set{x\in\ZZ_+^n:~ Ax\le b}, \qquad
A=(a_{ij})\in\RR_+^{m\times n}, \qquad
b=(b_i) \in\RR_+^m.
$$

The {\em integer} $m${-dimensional knapsack problem} is to find $x$
such that
\begin{equation}
cx \to \max\qquad
\text{s.t. }
x\in L(A,b),
\label{f_md_knapsack_problem}
\end{equation}
where $c=(c_j)\in\RR_+^n$ \cite{MartelloToth,KellererPferschyPisinger}.

Denote by $v\no{j}$ ($j=1,2,\dots,n$) a point in $L(A,b)$, all of whose coordinates $v\no{j}_i$ are $0$, except for of $v\no{j}_j$, which is
$$
v\no{j}_j = \min_{i:~a_{ij} > 0} \floor{b_i/a_{ij}}.
$$
It is not hard to see that $v\no{j}\in L(A,b)$ and $cv\no{j}=c_jv\no{j}_j$.
Denote $V(A,b) = \set{v\no{1},\dots,v\no{n}}$.
A point $v\no{j}$, on which the maximum
$$
\max_j cv\no{j}
$$
attained is called an {\em approximate solution} to the problem
(\ref{f_md_knapsack_problem}). 
The {\em precision of the approximate solution} is
$$
\alpha(A,b,c) = \frac{\max\limits_{x\in V(A,b)}cx}{\max\limits_{x\in L(A,b)}cx}.
$$
In this paper we study the value
$$
\alpha_{mn} = \inf_{\substack{A\in\RR_+^{m\times n}\\ b\in\RR_+^m,~ c\in\RR_+^n}} \alpha(A,b,c).
$$

\section{Previous work}\label{sec_Previous_Work}

The precision of the approximate solution to the $1$-dimensional ($m=1$) knapsack problem was studied in \cite{KohliKrishnamurti1992,KohliKrishnamurti1995,CapraraKellererPferschyPisinger2000,ChirkovShevchenko2006}.
In particular, in \cite{KohliKrishnamurti1992,ChirkovShevchenko2006} it was proven that
$$
\delta_n=\delta_{n-1}(\delta_{n-1}+1),
\qquad
\varepsilon_n=1+\varepsilon_{n-1}(\delta_{n-1}+1),
\qquad
\delta_1=\varepsilon_1=1.
$$
The sequence $\set{\delta_n}$ is the A007018 sequence in On-Line Encyclopedia of Integer Sequences (OEIS)~\cite{OEIS}.
The sequence $\set{\epsilon_n}$ is currently absent in OEIS.

The sequence $\alpha_{1n}=\delta_n/\varepsilon_n$ decreases monotonously and tends to the value $\alpha_{1\infty} = 0.591355492056890\dots$
The values for $\delta_n$, $\varepsilon_n$ and $\alpha_{1n}$ for small $n$ are presented in Table~\ref{tab_values}.

\begin{table}[tb]
    \caption{Values of $\delta_n$, $\varepsilon_n$ and $\alpha_{1n}$ for small $n$}\label{tab_values}
    \centering
    \small
    $
    \begin{array}{|c|r|r|l|}
    \hline
    n & \multicolumn{1}{c|}{\delta_n=\delta_{n-1}(\delta_{n-1}+1)} & \multicolumn{1}{c|}{\varepsilon_n=1+\varepsilon_{n-1}(\delta_{n-1}+1)} & \multicolumn{1}{c|}{\alpha_{1n}=\delta_n/\varepsilon_n} \\
    \hline
    1 & 1 & 1 & 1.000000000000000 \\
    2 & 2 & 3 & 0.666666666666667 \\
    3 & 6 & 10 & 0.600000000000000 \\
    4 & 42 & 71 & 0.591549295774648 \\
    5 & 1806 & 3054 & 0.591355599214145 \\
    6 & 3263442 & 5518579 & 0.591355492056923 \\
    7 & 10650056950806 & 18009568007498 & 0.591355492056890 \\
    8 & 113423713055421844361000442 & 191802924939285448393150887 & 0.591355492056890 \\
    \hline
    \end{array}
    $
\end{table}

In \cite{KohliKrishnamurti1992,KohliKrishnamurti1995} these results are used in constructing the approximate scheme for the integer knapsack problem. 
Note that $\alpha_{1n}$ is even higher than the guaranteed precision $0.5$ of the greedy algorithm \cite{MartelloToth}. 

The infinum for $\alpha_{1n}$ is achieved on the problem (the worst case)
$$
\sum_{j=1}^n \frac{x_j}{\delta_j} \to \max
$$
s.t.
$$
\sum_{j=1}^n \frac{x_j}{\delta_j+\mu_n} \le 1,
$$
where $0\le\mu_n<1$ and $\sum\limits_{j=1}^n\fracc{1}{\delta_j + \mu_n} = 1$.
In particular, 
$$
\mu_1 = 1, \quad
\mu_2 = \frac{\sqrt{5}-1}{2} = 0.61803\dots, \quad
\mu_3 = 0.93923\dots, \quad
\mu_4 = 0.99855\dots
$$
The optimal solution vector to this problem is $(1,1,\dots,1)$ and the optimal solution value is $\varepsilon_n/\delta_n$, whereas the approximate solution vectors are 
$$
(1,0,0\dots,0),\quad
(0,\delta_2,0,\dots,0),\quad
(0,0,\delta_3,\dots,0),\quad
\dots,\quad
(0,0,0,\dots,\delta_n)
$$
and the corresponding value of the objective function is $1$.

Lower and upper bounds for the guaranteed precision for $k\ge 2$ are obtained in \cite{ChirkovShevchenko2006}.

In this paper we obtain formulas for $\alpha_{mn}$ for $m \ge 1$.
In particular, we prove that 
$\alpha_{mn}\to\fracc{\alpha_{1\infty}}{m}$ if $n\to\infty$ and $m$ is fixed.

\section{Preliminaries}\label{sec_preliminaries}

\begin{lemma}\label{lemma_monotonicity}
    For any fixed $m$ the sequence $\set{\alpha_{mn}}$ decreases monotonously.
\end{lemma}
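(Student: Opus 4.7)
The plan is to establish $\alpha_{m,n+1}\le\alpha_{mn}$ by a padding construction that embeds an arbitrary $n$-variable instance as an $(n+1)$-variable instance with exactly the same precision, so that the infimum over the larger family is at most the infimum over the smaller one.

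Fix any $(A,b,c)\in\RR_+^{m\times n}\times\RR_+^m\times\RR_+^n$ for which $\max_{x\in L(A,b)}cx>0$ (so $\alpha(A,b,c)$ is defined). I would form $(A',b,c')$ by appending to $A$ a single column with entries $a'_{i,n+1}>b_i$ for every $i=1,\dots,m$, and setting $c'_{n+1}=0$; the other entries are inherited unchanged. Next I would check the numerator of $\alpha(A',b,c')$: since $a'_{i,n+1}>0$, the minimum in the definition of $v\no{n+1}_{n+1}$ ranges over all indices $i$, and each $\floor{b_i/a'_{i,n+1}}=0$, so $v\no{n+1}=0$ and $c'v\no{n+1}=0$. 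For $j\le n$, the vector $v\no{j}$ in the new instance is just the original $v\no{j}$ with a zero appended, and $c'v\no{j}=cv\no{j}$. Hence $\max_{x\in V(A',b)}c'x=\max_{x\in V(A,b)}cx$.

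For the denominator, $c'_{n+1}=0$ means that every $x\in L(A',b)$ yields $c'x=c(x_1,\dots,x_n)$, while projecting out $x_{n+1}$ (replacing it by $0$) only relaxes the $m$ original constraints, hence preserves feasibility. So $\max_{x\in L(A',b)}c'x=\max_{y\in L(A,b)}cy$. Combining the two identities gives $\alpha(A',b,c')=\alpha(A,b,c)$, and taking the infimum over $(A,b,c)$ on the right yields $\alpha_{m,n+1}\le\alpha_{mn}$, i.e.\ the sequence is monotonically non-increasing in $n$.

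There is no substantive obstacle — the argument is essentially a padding lemma. The only item worth a moment's care is the boundary case $b_i=0$, where we still pick $a'_{i,n+1}>0$ so that the minimum in the definition of $v\no{n+1}$ is nonempty; the floor $\floor{0/a'_{i,n+1}}=0$ still vanishes, so the construction goes through unchanged.
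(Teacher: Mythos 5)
Your construction is exactly the paper's: append a column $h$ with $h_i>b_i$ and extend $c$ by a zero, then observe that the new instance has the same approximate and optimal values, so $\alpha_{m,n+1}\le\alpha(A',b,c')=\alpha(A,b,c)$ and taking the infimum finishes the argument. The proof is correct and follows essentially the same route as the paper, differing only in that you argue via the zero cost coefficient and projection where the paper notes directly that every point of $L(A',b)$ is a point of $L(A,b)$ with a zero coordinate appended.
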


\begin{proof}
    Let $A\in\RR_+^{m\times n}$, $h,b\in\RR_+^m$, $c\in\RR_+^n$ and $h>b$.    
    Consider a matrix $A'=(A \mid h)\in\RR_+^{m\times (n+1)}$ and a vector $c' = (c, 0)\in\RR_+^{n+1}$.
    It is not hard to see that all points in $L(A',b)$ are obtained from the points in $L(A,b)$ by writing the zero component to the end.
    Hence $\alpha(A,b,c)=\alpha(A',b,c')\ge \alpha_{m,n+1}$.
    Due to the arbitrariness of $A$, $b$, $c$, we get
    $\alpha_{mn} \ge \alpha_{m,n+1}$.
\end{proof}

\begin{lemma}\label{lemma_A'}
    $\alpha(A,b,c) = \alpha(A',b,c)$ for some $A' \le A$, where each column of $A'$ contains at least one non-zero element.
\end{lemma}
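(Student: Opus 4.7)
The lemma is a technical WLOG reduction ensuring that the quantity $v\no{j}_j = \min_{i:\,a_{ij}>0}\floor{b_i/a_{ij}}$ is well-defined by requiring a nonzero entry in each column of the matrix. My strategy is to argue that any column of $A$ consisting entirely of zeros can be eliminated without altering $\alpha(A,b,c)$, after which the remaining matrix automatically satisfies $A'\le A$ with the desired property.

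Concretely, suppose column $j$ of $A$ is identically zero. The variable $x_j$ is then unconstrained by the system $Ax\le b$, so if $c_j>0$ the objective $cx$ is unbounded above on $L(A,b)$ and $\alpha(A,b,c)$ is not meaningfully defined; hence we may assume $c_j=0$. In that case $x_j$ plays no role in either the constraints or the objective, so every feasible point can be modified to have $x_j=0$ without changing $cx$ or losing feasibility, and in particular both $\max_{L(A,b)}cx$ and $\max_{V(A,b)}cx$ are attained at points with $x_j=0$. The matrix $A'$ obtained by removing all such columns therefore preserves $\alpha$, has every column containing a nonzero entry, and satisfies $A'\le A$ on the retained columns.

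The main step that needs careful checking is that $\max_{V(A,b)}cx$ is preserved when we drop an all-zero column: the putative point $v\no{j}$ for such a column would (if one adopts a convention for its value) contribute $c_j v\no{j}_j=0$ to the maximum, while each other $v\no{k}$ is defined identically with respect to the reduced matrix since its value depends only on the entries $a_{ik}$, which are unchanged. Consequently, $\max_{V(A,b)}cx=\max_{V(A',b)}cx$ and $\max_{L(A,b)}cx=\max_{L(A',b)}cx$, giving $\alpha(A,b,c)=\alpha(A',b,c)$ as required.
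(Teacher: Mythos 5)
You have read the lemma as a statement about deleting all-zero columns, but that is not the content the paper needs or proves. The reduction you describe is essentially vacuous (an all-zero column with $c_j>0$ makes the problem unbounded and $v\no{j}$ ill-defined, and with $c_j=0$ the column is irrelevant), and it leaves untouched any column of $A$ that has two or more positive entries. The lemma's real purpose --- made clear by how it is used immediately afterwards to reduce $\alpha_{mn}$ to direct products of one-dimensional knapsack problems --- is that one may pass to a matrix $A'\le A$ in which every column has \emph{exactly} one non-zero entry (the ``at least one'' in the statement should be read together with the construction in the proof, which produces exactly one). Your $A'$ does not have this property, so Lemma~\ref{lemma_direct_product} could not be derived from your version.

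The paper's argument is the step you are missing: for a column $t$, let $s$ attain $\min_{i:\,a_{it}>0}\floor{b_i/a_{it}}$ and set $a'_{it}=0$ for all $i\ne s$, keeping all other entries of $A$. Then $A'\le A$, hence $L(A,b)\subseteq L(A',b)$ and the optimal value $\max_{x\in L(\cdot,b)}cx$ can only grow; on the other hand the minima $\min_{k:\,a_{kj}>0}\floor{b_k/a_{kj}}$ are unchanged for every $j$, so $V(A,b)=V(A',b)$ and the approximate value is unchanged. Therefore $\alpha(A,b,c)\ge\alpha(A',b,c)$, which is the direction relevant to the infimum, and iterating the operation over all columns yields a matrix with a single non-zero entry per column, i.e.\ a direct product of knapsack problems. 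None of this appears in your proposal: you have proved a different and, for the paper's purposes, insufficient statement, so there is a genuine gap.
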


\begin{proof}
Let for some $s$, $t$ we have $a_{st} > 0$ and for all $i\ne s$
$$
\floor{\frac{b_s}{a_{st}}} \le \floor{\frac{b_i}{a_{it}}}
$$
(if there are no such $s$, $t$, then put $A'=A$ and $A'$ has the  required form).
From the matrix $A$ we construct a matrix $A'$ by setting $a'_{it} = 0$ for all $i\ne s$ and $a'_{ij}=a_{ij}$ otherwise.

For all $x\in\RR_+^n$ we have $A'x \le Ax$. Hence $L(A,b)\subseteq L(A',b)$. Hence
$$
\max_{x\in L(A,b)} cx \le \max_{x\in L'(A,b)} cx.
$$
But
$$
\min_{k:~a_{kj}>0} \floor{\frac{b_k}{a_{kj}}} = \min_{k:~a'_{kj}>0} \floor{\frac{b_k}{a'_{kj}}}
\qquad (j=1,2,\dots,n),
$$
hence $V(A,b) = V(A',b)$.
Now we have
$$
\alpha(A,b,c) 
= \frac{\max\limits_{x\in V(A,b)} cx}{\max\limits_{x\in L(A,b)} cx} 
\ge \frac{\max\limits_{x\in V(A',b)} cx}{\max\limits_{x\in L(A',b)} cx}
= \alpha(A',b,c). 
$$

To complete the proof we note that the procedure described above can be performed until the matrix $A'$ acquires the required form.
\end{proof}  

From Lemma~\ref{lemma_A'} it follows that to study $\alpha_{mn}$ it is enough to consider only multi-dimensional knapsack problems with constraints
$$
\newcommand{\pplus}{\!+...+\!}
\left\{
\begin{narrowarray}{ccccc}
a_{11}x_1\pplus a_{1l_1}x_{l_1} & & & & \le b_1, \\
& \!\! a_{2,l_1+1}x_{l_1+1}\pplus a_{2,l_2}x_{l_2} & & & \le b_2, \\
\multicolumn{5}{c}{\dotfill} \\
& & & \!\!\!\! a_{m,l_{m-1}+1}x_1\pplus a_{mn}x_{n} & \le b_m, \\
\end{narrowarray}
\right.
$$
that can be called a {\em direct product of $m$ knapsack problems}.
All inequalities $0\le b_i$ have to be deleted due to Lemma~\ref{lemma_monotonicity}.
Denote $n_i = l_i - l_{k-1}$, where $l_0=0$, $l_m=n$ ($i=1,2,\dots,m$).  
Thus, we have proved the following.

\begin{lemma}\label{lemma_direct_product}
    For each $m$, $n$ the infimum $\alpha_{mn}$ is attained on the direct product of knapsack problems. 
\end{lemma}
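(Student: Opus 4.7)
The plan is to apply Lemma~\ref{lemma_A'} repeatedly until every column of $A$ has at most one positive entry, and then to reorganize the constraints into block form. I would iterate the single-column reduction of Lemma~\ref{lemma_A'} as follows: so long as some column $t$ contains two or more positive entries, pick an index $s$ attaining $\min_{i:\,a_{it}>0}\lfloor b_i/a_{it}\rfloor$ and zero out all other entries in column~$t$. By Lemma~\ref{lemma_A'} this leaves $\alpha(A,b,c)$ unchanged, and it strictly decreases the total number of positive entries of $A$, so the procedure terminates after finitely many steps.

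After termination, each column of $A$ contains at most one positive entry. Columns with $c_j=0$ may be discarded without changing $\alpha(A,b,c)$, and a column that became entirely zero with $c_j>0$ would make $\max_{L(A,b)} cx=\infty$ and so contribute nothing to the infimum; consequently we may assume every column has exactly one positive entry. Permuting columns so that those whose nonzero lies in row $1$ come first, then those in row $2$, and so on, yields precisely the block structure displayed in the excerpt, i.e.\ a direct product of $m$ knapsack problems with $n_1+\cdots+n_m=n$. Since direct products form a subclass of the instances over which the infimum $\alpha_{mn}$ is taken, the opposite inequality $\alpha_{mn}\le\inf_{\text{direct products}}\alpha$ is automatic, and we are done.

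The step I expect to be the main obstacle is handling the degenerate case where some block has $n_i=0$, i.e.\ the row reduces to the vacuous constraint $0\le b_i$. Such a row contributes nothing to $L(A,b)$ or to $V(A,b)$, so it can simply be deleted from $A$ without changing $\alpha$; however, this leaves one with a problem on $m'<m$ rows while $\alpha_{mn}$ is defined through matrices with exactly $m$ rows. The resolution is to invoke Lemma~\ref{lemma_monotonicity}: adjoining a trivial one-variable block (or equivalently a column that does not interact with the remaining ones) can only weakly decrease $\alpha$, so the infimum over direct products with every $n_i\ge 1$ equals the infimum allowing empty blocks, which by the previous paragraph equals $\alpha_{mn}$. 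The rest of the argument is routine bookkeeping about permutations and about the fact that $V(A,b)$ decomposes block-by-block along with $L(A,b)$.
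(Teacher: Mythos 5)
Your proposal is correct and follows essentially the same route as the paper, whose ``proof'' is the short paragraph preceding the lemma: iterate the reduction of Lemma~\ref{lemma_A'} until every column has a single positive entry, permute columns into the displayed block form, and dispose of the resulting vacuous inequalities $0\le b_i$ by appeal to Lemma~\ref{lemma_monotonicity}. Your write-up merely makes explicit the termination argument and the degenerate cases (all-zero columns, empty blocks) that the paper leaves implicit.
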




\section{The main result}

The main result of the paper is formulated in the following theorem.

\begin{theorem}
For each $m$, $n$
\begin{equation}
    \alpha_{mn} = \fracc{\alpha_{1q}}{m+r\left(\fracc{\alpha_{1q}}{\alpha_{1,q+1}} - 1\right)},
    \label{f_alpha_mn}
\end{equation}
    where $n=qm+r$, $q = \floor{n/m}$.
\end{theorem}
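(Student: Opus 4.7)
The plan is to combine Lemma~\ref{lemma_direct_product} with a block decomposition of the approximation ratio and then solve an extremal problem over compositions of $n$ into $m$ positive parts. By Lemma~\ref{lemma_direct_product} it suffices to consider direct products of $m$ one-dimensional knapsack subproblems with block sizes $n_1, \ldots, n_m \ge 1$, $n_1 + \cdots + n_m = n$. For such a product, because $L$, $V$, and the objective all decompose blockwise, one has
\begin{equation*}
\alpha(A, b, c) = \frac{\max_{1 \le i \le m} T_i}{\sum_{i=1}^m S_i},
\end{equation*}
where $S_i = \max_{x \in L(A_i, b_i)} c_i x$ and $T_i = \max_{x \in V(A_i, b_i)} c_i x$ are the exact and approximate optima of the $i$-th subproblem.

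Next I determine the infimum of $\alpha(A, b, c)$ for a fixed composition $(n_1, \ldots, n_m)$. Since $T_i/S_i \ge \alpha_{1,n_i}$, setting $T := \max_i T_i$ gives $S_i \le T/\alpha_{1,n_i}$ and hence
\begin{equation*}
\alpha(A, b, c) \ge \frac{1}{\sum_{i=1}^m 1/\alpha_{1,n_i}}.
\end{equation*}
Conversely, taking each subproblem to be the one-dimensional worst-case example from Section~\ref{sec_Previous_Work} (which attains $\alpha_{1,n_i}$ exactly) and then rescaling the objective of each so that all the $T_i$ become equal, one realizes this lower bound. So the blockwise infimum over shapes $(n_1,\ldots,n_m)$ equals $\bigl(\sum_i 1/\alpha_{1,n_i}\bigr)^{-1}$, and $\alpha_{mn}$ is obtained by maximizing $f(n_1) + \cdots + f(n_m)$ over positive integer compositions of $n$, where $f(n) := 1/\alpha_{1,n} = \varepsilon_n/\delta_n$.

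Finally, using $\delta_{n+1} = \delta_n(\delta_n+1)$ and $\varepsilon_{n+1} = 1 + \varepsilon_n(\delta_n+1)$, a direct calculation gives $f(n+1) - f(n) = 1/\delta_{n+1}$, which is positive and strictly decreasing in $n$; so $f$ is strictly concave on the positive integers. A standard swap argument---if $n_i \ge n_j + 2$, then replacing them by $n_i - 1$ and $n_j + 1$ strictly increases the sum, because $f(n_j+1) - f(n_j) = 1/\delta_{n_j+1} > 1/\delta_{n_i} = f(n_i) - f(n_i-1)$---shows that the maximum is attained at the balanced composition with $r$ parts equal to $q+1$ and $m-r$ parts equal to $q$. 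Substituting into the formula from the previous paragraph and clearing $\alpha_{1q}$ from the denominator yields~(\ref{f_alpha_mn}). The main technical step is the concavity identity $f(n+1)-f(n) = 1/\delta_{n+1}$; once it is in place, the lower-bound argument, the majorization step, and the final algebraic rearrangement are routine.
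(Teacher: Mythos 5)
Your proposal is correct and follows essentially the same route as the paper: reduce to direct products via Lemma~\ref{lemma_direct_product}, bound $\alpha(A,b,c)$ below by $\bigl(\sum_i 1/\alpha_{1,n_i}\bigr)^{-1}$, use the identity $\varepsilon_{n+1}/\delta_{n+1}-\varepsilon_n/\delta_n = 1/\delta_{n+1}$ to show concavity and hence optimality of the balanced composition, and attain the bound with the one-dimensional worst-case instances (which in the paper are already normalized to have approximate value $1$, so your rescaling step is the same construction). No gaps; the argument matches the paper's Lemmas~\ref{lemma_lower_bound} and~\ref{lemma_upper_bound}.
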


The theorem follows from two lemmas below.

\begin{lemma}\label{lemma_lower_bound}
For each $m$, $n$    
    $$
    \alpha_{mn} \ge \fracc{\alpha_{1q}}{m+r\left(\fracc{\alpha_{1q}}{\alpha_{1,q+1}} - 1\right)}.
    $$
\end{lemma}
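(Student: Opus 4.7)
The plan is to use Lemma~\ref{lemma_direct_product} to restrict attention to a direct product of $m$ one-dimensional knapsack problems with block sizes $n = n_1 + \cdots + n_m$, to obtain a closed-form lower bound for $\alpha$ on such an instance in terms of the known 1-dimensional precisions $\alpha_{1,n_i}$, and to minimize that bound over compositions. For a fixed direct-product instance, let $F_i$ denote the exact optimum of block $i$ and $G_i$ the best value of $c_j v^{(j)}_j$ over the variables $j$ belonging to block $i$. Since every axis vector $v^{(j)} \in V(A,b)$ is supported inside a single block, the overall approximate optimum equals $\max_i G_i$ while the overall exact optimum is $\sum_i F_i$. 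The definition of $\alpha_{1,n_i}$ gives $G_i \ge \alpha_{1,n_i} F_i$, so
\[
\alpha(A,b,c) \;=\; \frac{\max_i G_i}{\sum_i F_i} \;\ge\; \frac{\max_i G_i}{\sum_i G_i/\alpha_{1,n_i}} \;\ge\; \frac{1}{\sum_{i=1}^m 1/\alpha_{1,n_i}},
\]
the last step using $G_i \le \max_k G_k$ for every $i$.

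Next, I would minimize the right-hand side over all compositions of $n$ into $m$ positive parts, which is equivalent to maximizing $S(n_1,\dots,n_m) = \sum_i 1/\alpha_{1,n_i}$. A one-step swap argument -- replacing $(n_i,n_j)$ with $(n_i-1,n_j+1)$ whenever $n_i \ge n_j + 2$ -- drives the maximizer to the balanced composition with $r$ parts equal to $q+1$ and $m-r$ parts equal to $q$, \emph{provided} that the first differences of $f(n) := 1/\alpha_{1,n}$ are strictly decreasing. Substituting the balanced composition into $1/S$ and simplifying yields exactly $\alpha_{1q}/\!\left(m + r(\alpha_{1q}/\alpha_{1,q+1} - 1)\right)$.

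The only genuinely computational ingredient is the discrete concavity of $f$. From the recurrences $\delta_{n+1} = \delta_n(\delta_n + 1)$ and $\varepsilon_{n+1} = 1 + \varepsilon_n(\delta_n + 1)$ I expect to obtain the clean identity
\[
f(n+1) - f(n) \;=\; \frac{\varepsilon_{n+1}}{\delta_{n+1}} - \frac{\varepsilon_n}{\delta_n} \;=\; \frac{1}{\delta_{n+1}},
\]
which is strictly decreasing since $\delta_n$ grows super-exponentially. The main difficulty I foresee is bookkeeping rather than conceptual: carefully matching the block-wise optima to the overall approximate and exact optima of the direct product, and dealing with the edge case $n < m$ (which either can be excluded by hypothesis, or absorbed via Lemma~\ref{lemma_monotonicity} by padding with dummy variables whose weight columns are large).
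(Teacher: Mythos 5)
Your proposal is correct and follows essentially the same route as the paper's proof: reduce to a direct product via Lemma~\ref{lemma_direct_product}, bound the product's precision below by $1\big/\sum_{i=1}^m 1/\alpha_{1,n_i}$, and use the identity $1/\alpha_{1,n+1}-1/\alpha_{1,n}=1/\delta_{n+1}$ (discrete concavity of $1/\alpha_{1,n}$) to show the balanced composition $n_1=\dots=n_r=q+1$, $n_{r+1}=\dots=n_m=q$ is extremal. The differences are purely cosmetic (your $F_i,G_i$ are the paper's $\beta_i,\gamma_i$, and your swap argument is the paper's midpoint-concavity inequality).
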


\begin{proof}
Thanks to Lemma~\ref{lemma_direct_product}, it is enough to consider only direct products of $m$ knapsack problems. Let $\tau_i = \gamma_i/\beta_i$ 
be the precision of approximate solution to the $i$-th knapsack problem
$(i=1,2,\dots,m)$,
where $\gamma_i$ is the approximate solution value, $\beta_i$ is the optimal solution value. For their product we have
$$
\alpha(A,b,c) 
= \frac{\max\limits_{i=1,\dots,m}\gamma_i}{\sum\limits_{i=1}^m\beta_i}
= \frac{\gamma_s}{\sum\limits_{i=1}^m\beta_i}
= \frac{1}{\sum\limits_{i=1}^m\fracc{\beta_i}{\gamma_s}}
= \frac{1}{\sum\limits_{i=1}^m\fracc{\gamma_i}{\gamma_s\tau_i}}
\ge \frac{1}{\sum\limits_{i=1}^m\fracc{1}{\tau_i}}.
$$
The inequality turns into equality if and only if $\gamma_1=\gamma_2=\dots=\gamma_m$.
Since $\tau_s\ge\alpha_{1n_1}$ then
$$
\alpha(A,b,c)  \ge \frac{1}{\sum\limits_{i=1}^m\fracc{1}{\alpha_{1n_i}}}.
$$

Thus, we obtain the problem to find $n_1,n_2,\dots,n_m$ such that
\begin{equation}
\fracc{1}{\sum\limits_{i=1}^m\fracc{1}{\alpha_{1n_i}}} \to\min
\qquad
\text{s.t. } \sum\limits_{i=1}^m n_i = n.
\label{f_aux_problem}
\end{equation}
The sequence
$$
\frac{1}{\alpha_{1,n+1}} - \frac{1}{\alpha_{1n}} 
= \frac{\varepsilon_{n+1}}{\delta_{n+1}} - \frac{\varepsilon_n}{\delta_n} 
= \frac{1+\varepsilon_n(\delta_n+1)}{\delta_{n+1}} - \frac{\varepsilon_n(\delta_n+1)}{\delta_{n+1}}
= \frac{1}{\delta_{n+1}}
$$
decreases monotonously as $n\to\infty$, hence
$$
\frac{1}{\alpha_{1,n+2}} + \frac{1}{\alpha_{1n}} \le \frac{2}{\alpha_{1,n+1}}. 
$$
We conclude that the minimum for (\ref{f_aux_problem}) is reached if
$n_1=\dots = n_r = q+1$, $n_{r+1} = \dots = n_m = q$.
Thus,
$$
\alpha(A,b,c)\ge\fracc{1}{\sum\limits_{i=1}^m \fracc{1}{\alpha_{1n_i}}}
= \fracc{1}{\fracc{r}{\alpha_{1,q+1}} + \fracc{m-r}{\alpha_{1q}}}
= \fracc{\alpha_{1q}}{m+r\left(\fracc{\alpha_{1q}}{\alpha_{1,q+1}} - 1\right)}.
$$
\end{proof}

In the following lemma we construct a class of (worst) multi-dimensional knapsack problems on which the bound (\ref{f_alpha_mn}) is attained.

\begin{lemma}\label{lemma_upper_bound}
For each $m$ and $n$  
$$
\alpha_{mn} \le \fracc{\alpha_{1q}}{m+r\left(\fracc{\alpha_{1q}}{\alpha_{1,q+1}} - 1\right)},
$$
where $n=qm+r$, $q = \floor{n/m}$.
\end{lemma}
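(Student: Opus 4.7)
The plan is to exhibit a single $m$-dimensional instance whose precision equals the right-hand side of the stated inequality. The proof of Lemma~\ref{lemma_lower_bound} already identifies the candidate: take a direct product of $m$ one-dimensional knapsack problems, with $r$ of them of ``size'' $q+1$ and the remaining $m-r$ of size $q$, and install on each block the normalized 1D worst-case instance recalled in Section~\ref{sec_Previous_Work}.

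More precisely, for $i=1,\dots,m$ I would set $n_i = q+1$ if $i\le r$ and $n_i = q$ otherwise, give the $i$-th block the constraint $\sum_{j=1}^{n_i} x_j/(\delta_j+\mu_{n_i}) \le 1$ with objective coefficients $c_j=1/\delta_j$, and combine the $m$ blocks on disjoint variable supports (in the sense of Lemma~\ref{lemma_direct_product}) into an instance $(A,b,c)\in\RR_+^{m\times n}\times\RR_+^m\times\RR_+^n$. Since the block supports are disjoint, every feasible $x\in L(A,b)$ decomposes as a concatenation of feasible vectors for the blocks, so that
\[
\max_{x\in L(A,b)}cx = \sum_{i=1}^m \frac{\varepsilon_{n_i}}{\delta_{n_i}} = \sum_{i=1}^m \frac{1}{\alpha_{1n_i}} = \frac{r}{\alpha_{1,q+1}} + \frac{m-r}{\alpha_{1q}}.
\]
Similarly each $v\no{j}\in V(A,b)$ is the $v\no{j}$ of its home block padded with zeros, and in the normalized 1D worst case $cv\no{j}=c_j\delta_j=1$, giving $\max_{x\in V(A,b)} cx = 1$.

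Taking the ratio yields $\alpha(A,b,c)=1/\bigl(r/\alpha_{1,q+1}+(m-r)/\alpha_{1q}\bigr)$, which simplifies to $\alpha_{1q}/(m+r(\alpha_{1q}/\alpha_{1,q+1}-1))$ by exactly the algebraic step that closes the proof of Lemma~\ref{lemma_lower_bound}. Hence $\alpha_{mn}\le\alpha(A,b,c)$ equals the stated bound.

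The one place that needs care is the equalization: the inequality $\alpha\ge 1/\sum 1/\tau_i$ used in the proof of Lemma~\ref{lemma_lower_bound} is tight only when all block-approximate values $\gamma_i$ coincide, which is precisely why one must use the normalized form $c_j=1/\delta_j$ instead of an arbitrary rescaling of the 1D worst case; with this normalization every $\gamma_i$ equals $1$ automatically. Edge situations (such as $n_i=1$, where the 1D extremal instance degenerates) are either trivial or absorbed into Lemma~\ref{lemma_monotonicity}.
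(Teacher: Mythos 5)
Your construction is exactly the one the paper uses: a direct product of $r$ copies of the normalized one-dimensional worst-case instance of size $q+1$ and $m-r$ copies of size $q$, with optimal value $r/\alpha_{1,q+1}+(m-r)/\alpha_{1q}$ and approximate value $1$. The argument is correct and follows essentially the same route as the paper's proof; your added remark about why the normalization $c_j=1/\delta_j$ forces all block-approximate values to equal $1$ is a sound (and slightly more explicit) justification of the same computation.
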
    

\begin{proof}
Consider the direct product of $r$ knapsack problems of the form
$$
\max \sum_{j=1}^{q+1}\frac{x_j}{\delta_j}
\to \max
\qquad
\text{s.t. }
\sum_{j=1}^{q+1}\frac{x_j}{\delta_j+\mu_{q+1}}\le 1
$$
and $m-r$ knapsack problems of the form
$$
\max \sum_{j=1}^q\frac{x_j}{\delta_j} \to \max
\qquad
\text{s.t. }
\sum_{j=1}^q\frac{x_j}{\delta_j+\mu_q}\le 1.
$$
The precision of the approximate solutions to these problems is $\alpha_{1q}$ and $\alpha_{1,q+1}$ respectively (see Section~\ref{sec_Previous_Work}).
For the product of these problems the optimal solution value is 
$$
r\, \frac{\varepsilon_{q+1}}{\delta_{q+1}} +
(m-r)\, \frac{\varepsilon_q}{\delta_q} = 
\frac{r}{\alpha_{1,q+1}} +
\frac{m-r}{\alpha_{1q}}
$$
and the approximate solution value is $1$,
hence the precision of the approximate solution is
$$
\alpha(A,b,c) = \fracc{1}{\fracc{r}{\alpha_{1,q+1}} + \fracc{m-r}{\alpha_{1q}}} = \fracc{\alpha_{1q}}{m+r\left(\fracc{\alpha_{1q}}{\alpha_{1,q+1}} - 1\right)}.
$$
\end{proof}

\begin{corollary}\label{cor_alpha_mn_2_ineq}
$$
\fracc{\alpha_{1,\ceil{n/m}}}{m}
\le \alpha_{mn} 
\le \fracc{\alpha_{1,\floor{n/m}}}{m}. 
$$
\end{corollary}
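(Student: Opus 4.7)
The plan is to deduce both inequalities directly from the closed-form formula~(\ref{f_alpha_mn}) proved in the theorem. Write $n = qm + r$ with $q = \floor{n/m}$ and $0 \le r < m$, so that $\floor{n/m} = q$ always, while $\ceil{n/m} = q$ if $r = 0$ and $\ceil{n/m} = q+1$ if $r > 0$. The only external fact needed is that the sequence $\{\alpha_{1n}\}$ is non-increasing (recalled in Section~\ref{sec_Previous_Work}, and also implied by Lemma~\ref{lemma_monotonicity} in the case $m=1$); in particular $\alpha_{1q}/\alpha_{1,q+1} \ge 1$.

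For the upper bound, the correction term $r(\alpha_{1q}/\alpha_{1,q+1} - 1)$ in the denominator of~(\ref{f_alpha_mn}) is non-negative, so the denominator is at least $m$. Substituting into the formula gives $\alpha_{mn} \le \alpha_{1q}/m = \alpha_{1,\floor{n/m}}/m$, which is the right-hand inequality. This step is immediate.

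For the lower bound, first dispose of the case $r = 0$: then $\ceil{n/m} = q$ and~(\ref{f_alpha_mn}) specialises to $\alpha_{mn} = \alpha_{1q}/m$, so equality holds. When $r > 0$ the claim becomes $\alpha_{mn} \ge \alpha_{1,q+1}/m$; clearing denominators in~(\ref{f_alpha_mn}), this is equivalent to
\[
m\,\alpha_{1q} \;\ge\; \alpha_{1,q+1}\Bigl(m + r\bigl(\tfrac{\alpha_{1q}}{\alpha_{1,q+1}} - 1\bigr)\Bigr),
\]
which after simplification reduces to $(m - r)\bigl(\alpha_{1q} - \alpha_{1,q+1}\bigr) \ge 0$. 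Both factors are non-negative (the first because $r < m$, the second by monotonicity of $\{\alpha_{1n}\}$), so the inequality holds.

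The whole argument is routine algebra built on top of the theorem, so I do not foresee any substantive obstacle; the only thing worth double-checking carefully is the direction of the rearrangement in the lower-bound case, to make sure that no inequality is inadvertently reversed when multiplying through by the (positive) denominator of~(\ref{f_alpha_mn}).
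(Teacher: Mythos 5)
Your proof is correct and follows essentially the same route as the paper: the upper bound comes from observing that the correction term in the denominator of (\ref{f_alpha_mn}) is non-negative, and the lower bound from the case split on $r$ together with $r<m$ and the monotonicity of $\{\alpha_{1n}\}$. The only (cosmetic) difference is that you clear denominators and factor the lower-bound inequality as $(m-r)(\alpha_{1q}-\alpha_{1,q+1})\ge 0$, whereas the paper bounds the denominator directly by replacing $r$ with $m$.
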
    
\begin{proof}
The first inequality obviously follows from (\ref{f_alpha_mn}).
Let us prove the second one.
If $r=0$ then 
$$
\alpha_{mn} = \fracc{\alpha_{1q}}{m} = \fracc{\alpha_{1,\ceil{n/m}}}{m}.
$$
If $0<r<m$ then 
$$
\alpha_{mn} = \fracc{\alpha_{1q}}{m+r\left(\fracc{\alpha_{1q}}{\alpha_{1,q+1}} - 1\right)} 
> \fracc{\alpha_{1q}}{m+m\left(\fracc{\alpha_{1q}}{\alpha_{1,q+1}} - 1\right)} 
= \fracc{\alpha_{1,q+1}}{m}
= \fracc{\alpha_{1,\ceil{n/m}}}{m}.
$$
\end{proof}

From Corollary~\ref{cor_alpha_mn_2_ineq} we obtain the following.
\begin{corollary}
If $n\to\infty$, $m=o(n)$ then $\alpha_{mn}\sim\fracc{\alpha_{1,\floor{n/m}}}{m}$.
\end{corollary}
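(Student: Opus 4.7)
The plan is to deduce this directly from Corollary~\ref{cor_alpha_mn_2_ineq} by sandwiching $\alpha_{mn}$ between $\alpha_{1,\ceil{n/m}}/m$ and $\alpha_{1,\floor{n/m}}/m$ and arguing that these two quantities are asymptotically equivalent. Concretely, asymptotic equivalence $\alpha_{mn}\sim \alpha_{1,\floor{n/m}}/m$ is equivalent to
$$
\frac{\alpha_{mn}}{\alpha_{1,\floor{n/m}}/m}\to 1,
$$
and by Corollary~\ref{cor_alpha_mn_2_ineq} this ratio is squeezed between $\alpha_{1,\ceil{n/m}}/\alpha_{1,\floor{n/m}}$ and $1$. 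So the whole task reduces to showing
$$
\frac{\alpha_{1,\ceil{n/m}}}{\alpha_{1,\floor{n/m}}}\longrightarrow 1.
$$

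First I would note that the hypothesis $m=o(n)$ means $n/m\to\infty$, so both $\floor{n/m}$ and $\ceil{n/m}$ tend to $\infty$. Then I would invoke the result recalled in Section~\ref{sec_Previous_Work}: the sequence $\{\alpha_{1k}\}$ is monotonically decreasing and converges to the positive limit $\alpha_{1\infty}=0.591355492056890\dots$. Consequently both $\alpha_{1,\ceil{n/m}}$ and $\alpha_{1,\floor{n/m}}$ tend to $\alpha_{1\infty}>0$, and since the limit is nonzero their ratio tends to $1$. Plugging this into the sandwich above yields $\alpha_{mn}\sim \alpha_{1,\floor{n/m}}/m$.

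There is really no hard step here; the only thing to be mindful of is the positivity of the limit $\alpha_{1\infty}$, which is what legitimises dividing the two convergent sequences to conclude that their ratio tends to $1$. Were $\alpha_{1\infty}$ zero, the argument would require a finer comparison of $\alpha_{1,\floor{n/m}}-\alpha_{1,\ceil{n/m}}$ with $\alpha_{1,\floor{n/m}}$ itself; but the explicit numerical value $\alpha_{1\infty}>0$ recalled in Section~\ref{sec_Previous_Work} makes this a non-issue and the corollary follows in a few lines.
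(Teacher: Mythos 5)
Your proposal is correct and matches the paper's intended argument: the paper derives this corollary directly from Corollary~\ref{cor_alpha_mn_2_ineq} (the sandwich $\alpha_{1,\ceil{n/m}}/m \le \alpha_{mn} \le \alpha_{1,\floor{n/m}}/m$), and your observation that $m=o(n)$ forces $n/m\to\infty$ so that both bounds converge to the positive limit $\alpha_{1\infty}$, making their ratio tend to $1$, is exactly the step the paper leaves implicit.
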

    
\begin{corollary}
    If $n\to\infty$ and $m$ is fixed then
    $\alpha_{mn}\to\fracc{\alpha_{1\infty}}{m}$.
\end{corollary}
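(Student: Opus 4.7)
The plan is to deduce this limit directly from Corollary~\ref{cor_alpha_mn_2_ineq} by a squeeze argument. Recall from Section~\ref{sec_Previous_Work} that the sequence $\set{\alpha_{1n}}$ is monotone decreasing and converges to $\alpha_{1\infty} = 0.591355492056890\dots$; this is the only external ingredient required.

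First, with $m$ held fixed and $n\to\infty$, both $\floor{n/m}$ and $\ceil{n/m}$ tend to $\infty$. Hence
$$
\lim_{n\to\infty}\alpha_{1,\floor{n/m}} = \lim_{n\to\infty}\alpha_{1,\ceil{n/m}} = \alpha_{1\infty},
$$
so dividing by the (fixed) constant $m$ yields
$$
\lim_{n\to\infty}\fracc{\alpha_{1,\floor{n/m}}}{m} = \lim_{n\to\infty}\fracc{\alpha_{1,\ceil{n/m}}}{m} = \fracc{\alpha_{1\infty}}{m}.
$$

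Second, Corollary~\ref{cor_alpha_mn_2_ineq} sandwiches $\alpha_{mn}$ between exactly these two quantities. Applying the standard squeeze theorem gives
$$
\lim_{n\to\infty}\alpha_{mn} = \fracc{\alpha_{1\infty}}{m},
$$
which is the claim. There is essentially no obstacle here: all the non-trivial content has already been absorbed into the two-sided bound of Corollary~\ref{cor_alpha_mn_2_ineq} and into the convergence of $\set{\alpha_{1n}}$ recalled from the one-dimensional analysis of \cite{KohliKrishnamurti1992,ChirkovShevchenko2006}.
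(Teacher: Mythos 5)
Your proof is correct and is exactly the argument the paper intends: the two-sided bound of Corollary~\ref{cor_alpha_mn_2_ineq} together with the monotone convergence $\alpha_{1n}\to\alpha_{1\infty}$ gives the limit by squeezing, since $\floor{n/m}$ and $\ceil{n/m}$ both tend to infinity for fixed $m$. The paper states this corollary without a written proof, deriving it directly from Corollary~\ref{cor_alpha_mn_2_ineq}, so your write-up simply makes the intended squeeze explicit.
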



\section{Conclusion} \label{sec_conclusion}

In this paper we derived exact and asymptotic formulas for the precision of approximate solutions to the $m$-dimensional knapsack problem. In particular, we proved that the precision tends to $0.59136\dots/m$ if $n\to\infty$ and $m$ is fixed. The proof of the attainability of the obtained bounds for the precision is constructive.

In the future, our results can be base for new fully polynomial time approximation schemes.








\end{document}